\newtheorem{theorem}{Theorem}[section]
\newtheorem{corollary}[theorem]{Corollary}
\newtheorem{lemma}[theorem]{Lemma}
\newtheorem{proposition}[theorem]{Proposition}
\theoremstyle{definition}
\theoremstyle{remark}
\numberwithin{equation}{section}
\newcommand{\tT}{\mathbf{T}}
\newcommand{\sS}{\mathbf{S}}
\newcommand{\Ch}{\mathbf{Ch}}
\DeclareMathOperator{\Cyl}{Cyl}
\DeclareMathOperator{\opp}{op}
\newcommand\adj[4]{\xymatrix{#1:#2\ar@<.5ex>[r]^{}&#3:#4\ar@<.5ex>[l]^{}}}
\newcommand{\bbZ}{\mathbb Z}
\newcommand{\bbN}{\mathbb N} 
\newcommand{\cC}{\mathcal C}
\newcommand{\frC}{\bm{\mathfrak C}}
\newcommand{\frD}{\bm{\mathfrak D}}
\newcommand{\cF}{\mathcal F}
\newcommand{\ii}{\mathcal I}
\newcommand{\jj}{\mathcal J}
\newcommand{\cM}{\mathcal M}
\newcommand{\ee}{\{e\}}
\newcommand{\OFS}{[O_\cF,\sS]}
\newcommand{\SG}{{\sS_G}}
\newcommand{\GS}{{_G\sS}}
\newcommand{\GSH}{_G\sS_H}
\begin{document}
	
	\title{A model structure via orbit spaces for equivariant homotopy}
	%\subtitle{Do you have a subtitle?\\ If so, write it here}
	\thanks{This project is supported by T\"UB\.ITAK under Grant No:117F085.}
	%\titlerunning{Short form of title}        % if too long for running head
	
	\author[Mehmet Akif Erdal]{Mehmet Akif Erdal} \address{Department of Mathematics\\
		Bilkent University\\ 06800, Ankara\\ Turkey}
	\email{merdal@fen.bilkent.edu.tr}

	\author[Asl\i \ G\"u\c{c}l\"ukan \.Ilhan]{Asl\i \ G\"u\c{c}l\"ukan \.Ilhan} \address{Department of Mathematics\\
		Dokuz Eyl\"ul University\\ 06800, \.Izmir \\ Turkey}
	\email{asli.ilhan@deu.edu.tr}
	%----------classification, keywords, date
	\subjclass{Primary 55U40; Secondary 55U35}
	
	\keywords{Model category, Equivariant homotopy, orbit space}
	
	\date{\today}

\begin{abstract}
For a given group $G$ and a collection of subgroups $\mathcal{F}$ of $G$, we show that there exist a left induced model structure on the category of right $G$-simplicial sets, in which the weak equivalences and cofibrations are the maps that induce weak equivalences and cofibrations on the $H$-orbits for all $H$ in $\mathcal{F}$. This gives a model categorical criterion for maps that induce weak equivalences on $H$-orbits to be weak equivalences in the $\mathcal{F}$-model structure. 
\keywords{equivariant homotopy \and orbit space \and model structure}
% \PACS{PACS code1 \and PACS code2 \and more}
\end{abstract}
\maketitle
\section{Introduction}

Let $\cF$ be a collection of subgroups of a given discrete group $G$ which is closed under conjugation and taking subgroups. By a space, we mean simplicial set and by a map we mean a simplicial map. There is a well-known model structure on the category $\GS$ of $G$- spaces, called the {$\cF$-model structure}, in which the weak equivalences and fibrations are maps that induce weak equivalences on $H$-fixed points for all $H\in \cF$ (firstly in \cite{piacenza} and latter generalized in \cite{stephan}). This is one of the standard ways of doing equivariant homotopy theory of (left) $G$-spaces. Let $O_\cF$ be the orbit category of $G$ with respect to the collection $\cF$; i.e., the category whose objects are homogeneous spaces $G/H$ with $H \in \cF$ and whose morphisms are $G$-equivariant maps. Here the $\cF$-model structure is transferred from the projective model structure on the category of orbit diagrams  along the left adjoint of the fixed point functor. %Indeed, this pair is a Quillen equivalence (see e.g. \cite{elmendorf})?.

Given a weak equivalence $f:A\to B$ in the $\cF$-model structure between fibrant-cofibrant objects, for each $H\in \cF$, the induced map $f/H:A/H\to B/H$ is a weak equivalence of spaces.  However, the converse statement is not true; i.e., a map that induce a weak equivalence on $H$-orbits for every $H\in \cF$, does not need to be a weak equivalence in the $\cF$-model structure. This is not true even when $\cF$ is the collection of all subgroups, in which case the model structure is the standard model structure. The following counter example is given by Tom Goodwillie (in the Math Overflow post \cite{MOGoodwillie}). Consider the action of the group $\bbZ/2$ on the suspension $\Sigma X$ by switching the two cones where $X$ is an acyclic but not a contractible space. Then both  $\Sigma X$ and the orbit space are contractible, but  $\Sigma X$ is not equivariantly contractible because the fixed point set, which is  $X$, is not contractible.

It is natural to ask when a map inducing weak equivalences on the $H$-orbits for each subgroup $H$ in $\cF$ is a weak equivalence in the $\cF$-model structure. We provide an answer to this question by constructing a model structure on $G$-spaces in which the weak equivalences and cofibrations are defined as maps inducing weak equivalences and cofibrations on the $H$-orbits (see Theorem \ref{thm:orbmodel}). For this, we apply the left transfer argument of \cite{shipleyhessinduced} to the adjoint pair $$\adj{\theta_!}{\SG}{[O_\cF,\sS]}{\theta^*},$$ where $\theta_!(A)(G/H)=A/H$ and $\theta^*(A)=A(G/\ee)$ to transfer the model structure on the category of orbit diagrams.  Therefore, we  obtain a model categorical criteria for this question: A map that induce weak equivalences on the orbits for subgroups in $\cF$ between objects that are fibrant in this model structure induces weak equivalences on $H$-fixed points for every $H$ in $\cF$ (see Corollary \ref{cor:main}). Nevertheless, the adjunction given above is not a Quillen equivalence even in the case $G=\bbZ_2$. 

In general, if the category $\cM_G$ of right $G$-objects in an admissible category $\cM$ admits underlying cofibrant replacements and  has good cylinder objects for cofibrant objects, then there exists a left induced model structure on $\cM_G$ by the left transfer argument. In the last section, we give some examples of such categories including the category of non-negatively graded chain complexes and simplicial $(G,H)$-bisets.

%In this sense, the orbit space model structures given in this paper is not as good as the $\cF$-model structure. However, it is still important and beneficial to know that there are such model categories where weak equivalences defined on orbit spaces instead of fixed points. Inducing such model structures become more easier by using the recent transfer arguments \cite{shipleyhessinduced}.

%In this paper, we consider the dual version and show that there exist left induced model structures on $\SG$, created by $\theta_!$, in which weak equivalences are determined by the subgroup quotient spaces, see \ref{thm:orbmodelproj}, \ref{thm:orbmodel}. Each of these model structures are transferred from either the projective or injective model structure on $\OFS$, the category of functors $O_\cF  \to \sS$, by using the left-induction methods introduced in \cite{shipleyhessinduced}. We also show that all of these model structures are Quillen equivalent to projective and injective model structures on $\OFS$.

\section{Preliminaries}

A model category is a bi-complete category with three distinguished classes of morphisms called weak equivalences, fibrations and cofibrations satisfying certain axioms (See \cite{quillen}). A fibration (cofibration) which is also a weak equivalences is called acyclic fibration (acyclic cofibration). An object $A$ is called fibrant if the unique morphism $A \to \ast $ to the terminal object is a fibration. Similarly an object $A$ is called cofibrant if the unique morphism $\emptyset \to A$ from the initial object is a cofibration. We refer reader to \cite{hovey} and \cite[A.2.6]{lurie} for the definitions of the cofibrantly generated model categories and the combinatorial model categories.

Let $C$ be a small category and $D$ be a combinatorial model category. Then the injective model structure exists on $[C,D]$ where weak equivalences and cofibrations are defined object-wise,  see \cite[A.2.8.2]{lurie}. Moreover the injective model structure on $[C,D]$ is also combinatorial. In the following section, we use the left transfer argument given in \cite{shipleyhessinduced} to obtain a model structure on the category of $G$-spaces from the injective model structure on the covariant orbit diagrams. We include the necessary theorems here for the convenience of the reader. \\

\paragraph{\bf{The left transfer}} Let $$\adj{F}{\frD}{\frC}{G}$$ be an adjoint pair between bi-complete locally presentable categories and assume $\frC$ admits an accessible model structure.  We say $f$ is a weak equivalence (resp. cofibration) in $\frD$ if $F(f)$ is a weak equivalence (resp. cofibration) in $\frC$. If exists, this model structure on $\frD$ is called the left induced or left transferred model structure. The following is the Acyclicity theorem for  the left transfer and the condition stated in the theorem is called the acyclicity condition. 
\begin{theorem}[Corollary 3.3.4 (ii)\cite{shipleyhessinduced}] 
	Let $$\adj{F}{\frD}{\frC}{G}$$ be an adjoint pair between bi-complete locally presentable categories and assume $\frC$ admits an accessible model structure. The left induced model structure on $\frD$ exists if and only if every morphism in $\frD$ that has the left lifting property with respect to all cofibrations is a weak equivalence.
\end{theorem}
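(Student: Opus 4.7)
My plan is to deduce this as an instance of Jeff Smith's recognition theorem for accessible (equivalently combinatorial) model structures, applied to the cofibrations and weak equivalences on $\frD$ transferred through $F$.

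The forward implication is essentially formal: if the left induced model structure on $\frD$ exists, then the weak factorization system of cofibrations and acyclic fibrations identifies the class of maps singled out by the stated lifting hypothesis with a class of acyclic fibrations, which in particular are weak equivalences.

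For the reverse implication, the first step is to construct small generating data for the two candidate weak factorization systems on $\frD$. Let $I$ and $J$ be generating sets of cofibrations and acyclic cofibrations in $\frC$; the aim is to produce generating sets for $\mathrm{cof}(\frD) := F^{-1}(\mathrm{cof}(\frC))$ and its acyclic analogue. Since $F$ is a left adjoint between locally presentable categories and $\frC$ admits an accessible model structure, both candidate classes are accessibly embedded accessible subcategories of the arrow category $\frD^{\to}$. By Makkai--Par\'e (or the locally presentable form used by Lurie), each class therefore admits a small generating set, and the small object argument then supplies the two required functorial factorizations. I would define fibrations in $\frD$ to be the maps with the right lifting property against the transferred generating acyclic cofibrations.

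It remains to verify that these data assemble into a model structure on $\frD$. Two-out-of-three and closure under retracts for the weak equivalences are inherited from $\frC$ along $F$, and both candidate weak factorization systems are cofibrantly generated by construction. The substantive point is the compatibility of the two systems: the intersection of fibrations with weak equivalences must coincide with the right class produced by the cofibration-first factorization. One inclusion is formal; the other asserts that every map characterized by the lifting property of the hypothesis is a weak equivalence, which is exactly the stated acyclicity condition. I expect this compatibility step to be the main obstacle, since accessibility produces the two weak factorization systems almost for free, but welding them into a single model structure requires genuine input, and the acyclicity condition captures precisely what is needed.
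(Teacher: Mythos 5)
The paper does not actually prove this statement: it is quoted as Corollary 3.3.4(ii) of Hess--Kedziorek--Riehl--Shipley and used as a black box, so your attempt can only be measured against the source. One preliminary point you have handled correctly: the paper's ``left lifting property with respect to $f$'' is, by its own nonstandard convention (see the square displayed after the theorem), what is usually called the \emph{right} lifting property, so the class in the statement is the would-be class of acyclic fibrations. Your forward implication --- in an existing model structure that class coincides with the acyclic fibrations, hence consists of weak equivalences --- is then immediate and correct.

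The reverse implication is where there is a genuine gap. Your plan hinges on the claim that $F^{-1}(\mathrm{cof}(\frC))$ and its acyclic analogue, being accessible and accessibly embedded subcategories of the arrow category of $\frD$, ``therefore admit small generating sets'' by Makkai--Par\'e, after which Smith's recognition theorem applies. Neither half of this is available. First, $\frC$ is only assumed to carry an \emph{accessible} model structure, not a cofibrantly generated one, so there need be no generating sets $I$ and $J$ to start from; and ``accessible (equivalently combinatorial)'' is not an equivalence --- combinatorial implies accessible, while the converse is not known in general. Second, and more seriously, a morphism lies in $F^{-1}(\mathrm{cof}(\frC))$ precisely when it lifts against $G(h)$ for \emph{every} acyclic fibration $h$ of $\frC$ --- a lifting condition against a proper class --- and being an accessible, accessibly embedded, weakly saturated subcategory of $\frD^{\to}$ does not by itself produce a generating set (if it did, it would essentially settle whether accessible model structures are combinatorial). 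The substantive content of the cited corollary is exactly that these two preimage classes are nonetheless the left classes of accessible weak factorization systems on $\frD$; this rests on the Makkai--Rosick\'y theory of cellular/accessible weak factorization systems and on Garner's small object argument over a generating \emph{category} rather than a set (HKRS Theorem 3.3.1), not on Smith's theorem. Granting that input, your final step is right and matches the source: the acyclic cofibrations automatically equal cofibrations intersected with weak equivalences (both are $F$-preimages of the corresponding classes in $\frC$), so the only axiom at stake is that every map with the lifting property in the statement is a weak equivalence --- the acyclicity condition --- and the standard retract argument upgrades the remaining containment to an equality. In short, the architecture of your argument is the correct one, but the load-bearing existence of the two factorizations is asserted via an inapplicable citation rather than proved.
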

 Here, a morphism $g$ said to have the left lifting property with respect to $f$ if for every commutative square 
$$\xymatrix{
	A\ar[d]_{f} \ar[r]^{} &	C\ar[d]^{g}\\
	B\ar[r]^{}\ar@{-->}[ru]^{h} & D},$$
 there is a morphism $h$ which makes the triangles commute. It is often hard to show the acyclicity conditions  directly. However, there is a cylinder object argument, dual to the Quillen's path object argument, which implies the acyclicity.
\begin{proposition}[Theorem 2.2.1 \cite{shipleyhessinduced}] \label{prop:cylinder-object-argument}
	Given an adjoint pair $$\adj{F}{\frD}{\frC}{G}$$ between bi-complete locally presentable categories with $\frC$ an accessible model category. If \begin{enumerate}
		\item ${\frD}$ admits underlying-cofibrant replacements; i.e., for every object $A$ in ${\frD}$, there exist a weak equivalence $\varepsilon_{A}:QA\to A$ such that $QA$ is cofibrant and for every morphism $f:A\to B$ in ${\frD}$ there is a morphism  $Qf:QA\to QB$ fitting into the following commutative diagram:
		$$\xymatrix{
			QA\ar[d]_{\varepsilon_{A}} \ar[r]^{Qf} &	QB\ar[d]^{\varepsilon_{B}}\\
			A\ar[r]^{f} & B},$$
		\item for every object $A$ in ${\frD}$, its cofibrant replacement has a good cylinder object; i.e., there is a factorization  $QA\times QA  \to \Cyl(QA)\to QA$ of the codiagonal such that the first map is a cofibration and the second map is a weak equivalence. 
	\end{enumerate}
	then the left induced model structure on $\frD$ exists and is accessible. 
\end{proposition}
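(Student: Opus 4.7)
The plan is to verify the acyclicity condition of the preceding theorem: every morphism $g:X\to Y$ in $\frD$ that has the left lifting property with respect to all cofibrations of $\frD$ should have $F(g)$ a weak equivalence in $\frC$. Fix such a $g$. First, I would replace $X$ and $Y$ by their underlying-cofibrant replacements $\varepsilon_X:QX\to X$ and $\varepsilon_Y:QY\to Y$ from hypothesis~(1), together with the induced map $Qg:QX\to QY$. The commuting square
$$\xymatrix{
QX\ar[r]^{Qg}\ar[d]_{\varepsilon_X} & QY\ar[d]^{\varepsilon_Y}\\
X\ar[r]_{g} & Y
}$$
combined with two-out-of-three in $\frC$ reduces the problem to proving that $F(Qg)$ is a weak equivalence.

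Second, I would harvest a partial section from the lifting property. Since $QY$ is cofibrant in $\frD$, the map $\emptyset\to QY$ is a cofibration of $\frD$, and lifting $\varepsilon_Y$ along $g$ produces $s:QY\to X$ with $gs=\varepsilon_Y$. Consequently $F(g)F(s)=F(\varepsilon_Y)$ is a weak equivalence. Third, I would invoke the good cylinder object $QX\sqcup QX\to\Cyl(QX)\to QX$ given by hypothesis~(2), writing $i_0,i_1$ for its two structural inclusions and $p$ for its projection. Since $gsQg=\varepsilon_YQg=g\varepsilon_X$, the outer square
$$\xymatrix{
QX\sqcup QX\ar[r]^-{(sQg,\varepsilon_X)}\ar[d] & X\ar[d]^{g}\\
\Cyl(QX)\ar[r]_-{g\varepsilon_X\circ p} & Y
}$$
commutes, and the LLP of $g$ against the left cofibration yields a lift $H:\Cyl(QX)\to X$ realizing $sQg$ and $\varepsilon_X$ as left homotopic in $\frD$. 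Because $F$ is a left adjoint it preserves coproducts, and by definition it sends cofibrations and weak equivalences of $\frD$ to those of $\frC$, so $F(\Cyl(QX))$ is a genuine cylinder object for $F(QX)$; in particular $F(i_0)$ and $F(i_1)$ are weak equivalences. From $F(H)F(i_1)=F(\varepsilon_X)$ two-out-of-three forces $F(H)$ to be a weak equivalence, whence so is $F(s)F(Qg)=F(H)F(i_0)$. Combined with the preceding step, the two-out-of-six property applied to the composable triple $F(Qg),F(s),F(g)$ implies that $F(Qg)$ is a weak equivalence, as required.

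The main obstacle is step three, specifically the verification that $F$ transports the good cylinder datum into honest cylinder datum in $\frC$; once this point is secured, the extraction of weak equivalences from the resulting left homotopy is routine. Accessibility of the induced model structure on $\frD$ then follows from accessibility of the one on $\frC$ together with standard persistence results for left transfer of accessible model structures. I would expect no serious difficulty beyond the cylinder-transport point: the hypotheses have been arranged precisely so that the dualization of Quillen's path-object argument runs through without friction.
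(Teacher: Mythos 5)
The paper does not actually prove this proposition---it is imported verbatim as Theorem 2.2.1 of \cite{shipleyhessinduced} and used as a black box---so there is no internal proof to compare against; judged on its own, your argument is a correct rendition of the standard dualization of Quillen's path-object argument, which is precisely how the cited source establishes the result, and every step (the section $s$ from lifting against $\emptyset\to QY$, the homotopy $H$ from lifting against the cylinder cofibration, the transport of the cylinder datum along the colimit-preserving left adjoint $F$) checks out. Two minor remarks: you silently and correctly read the source of the codiagonal as the coproduct $QA\amalg QA$ rather than the product $QA\times QA$ written in the statement, which is indeed a typo; and your last step genuinely needs the two-out-of-six property of weak equivalences in $\frC$, which is not a model category axiom but a theorem (weak equivalences are exactly the maps inverted in the homotopy category)---it cannot be replaced by the retract trick of the classical ``every object cofibrant'' version, because $gs=\varepsilon_Y$ is only a weak equivalence rather than an identity, so it would be worth citing that fact explicitly.
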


\section{Main results}

Given a discrete group $G$, let $BG$ be \emph{the delooping groupoid of $G$}; i.e., the groupoid having one object $\bullet$ with $BG(\bullet,\bullet)\cong G$. We denote the category of spaces  and maps by $\sS$. Let $\SG$ denote the category of right $G$-spaces with $G$-equivariant maps. One can identify $\SG$ by the functor category $[BG^{op},\sS]$.

Let $\cF$ be a given collection of subgroups of $G$ which is closed under taking subgroups and conjugates. We denote the orbit category  with respect to the collection $\cF$ by $O_\cF$, i.e., $O_\cF$ is the category whose objects are homogeneous spaces $G/H$ with $H\in \cF$ and whose morphisms are $G$-equivariant maps between them. We denote the morphism $f \in O_\cF[G/H, G/K]$ given by $f(H)=gK$ by $\widetilde{g}$. Note that $\widetilde{g} \in O_\cF[G/H,G/K]$ if and only if $H^g \subseteq K$. With the abuse of notation we write $g$ for $f$ when $H=K=\ee$. We also denote $f$ by $\widetilde{1}_K$ when $g=1$ and $H=\ee$.

Let $\theta:BG^{op}\to O_\cF$ be a functor given by $\theta(\bullet)=G/\ee$ and $\theta(g)= \widetilde{g}$. There is an induced functor $$\theta^*: \OFS\to \SG$$ defined by $\theta^*(T)=T(G/\ee)$ and $\theta^*(\eta)=\eta_{G/\ee}$ for any functor $T: O_\cF \to \sS $ and  natural transformation $\eta \in \OFS(T,U)$. Here the right $G$-action is induced by automorphisms of $G/\ee$. %The functor $\theta^*$ admits a left adjoint, and since $\sS$ is bicomplete, it can be constructed in terms of left Kan extension.  Denote the right and left adjoint to $\theta^*$ by $\theta_*$ and $\theta_!$, respectively.
Now we define the left adjoint  $ \theta_!$ of $\theta^*$. For any object $A$ in $\SG$, let $\theta_!(A): O_\cF \to \sS$ be the functor which sends $G/H$ to the orbit space $A/H$ and $\widetilde{g}:G/H \to G/K$ to a morphism defined by 
$$ \theta_!(A)(\widetilde{g})([a]_H)=[a\cdot g]_K,$$ where $[a]_H$ denotes the equivalence class of $a$ in $A/H$. For any $f$ in $\SG$, the natural transformation $\theta_!(f)$ is the map of orbit spaces induced by $f$. 
\begin{proposition}
	The functor $ \theta_!:\SG\to \OFS$ is left adjoint to $\theta^*$.
\end{proposition}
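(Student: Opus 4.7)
The plan is to exhibit an explicit natural bijection
\[
\Phi: \OFS(\theta_! A, T) \xrightarrow{\cong} \SG(A, \theta^* T)
\]
for each $A \in \SG$ and $T \in \OFS$, and then check naturality in both variables. Since $\sS$ is cocomplete and both $BG^{\opp}$ and $O_\cF$ are small, the precomposition functor $\theta^*$ automatically admits a left adjoint, namely the left Kan extension $\operatorname{Lan}_\theta$; so the substantive content is only to identify the formula for $\theta_!$ above with this left adjoint, which I would do by verifying the adjunction bijection directly rather than by computing a coend.

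The forward map sends a natural transformation $\eta: \theta_! A \Rightarrow T$ to its component $\eta_{G/\ee}: A \cong A/\ee \to T(G/\ee) = \theta^* T$. That this is $G$-equivariant with respect to the right $G$-action on $T(G/\ee)$ given by $v \cdot g := T(\widetilde{g})(v)$ is forced by the naturality squares of $\eta$ at the automorphisms $\widetilde{g}: G/\ee \to G/\ee$, using that $\theta_!(A)(\widetilde{g})$ on $A/\ee \cong A$ is precisely the right action by $g$. For the inverse, given a $G$-equivariant map $f: A \to T(G/\ee)$, I would define $\Psi(f)_{G/H}: A/H \to T(G/H)$ by $[a]_H \mapsto T(\widetilde{1}_H)(f(a))$, using the canonical projection $\widetilde{1}_H: G/\ee \to G/H$. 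Well-definedness on the $H$-orbit reduces to the identity $\widetilde{1}_H \circ \widetilde{h} = \widetilde{1}_H$ in $O_\cF$ for $h \in H$, combined with right $G$-equivariance of $f$, while naturality of $\Psi(f)$ with respect to a morphism $\widetilde{g}: G/H \to G/K$ reduces to the identity $\widetilde{g} \circ \widetilde{1}_H = \widetilde{1}_K \circ \widetilde{g}$ of morphisms $G/\ee \to G/K$, since both composites send $\ee \mapsto gK$.

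The identities $\Phi \circ \Psi = \mathrm{id}$ and $\Psi \circ \Phi = \mathrm{id}$ then follow from $\widetilde{1}_\ee = \mathrm{id}_{G/\ee}$ and from naturality of $\eta$ at $\widetilde{1}_H: G/\ee \to G/H$ respectively; naturality of $\Phi$ in $A$ and $T$ is a routine diagram chase. The main obstacle is not conceptual but notational: one must keep careful track of left versus right actions, the convention that $\widetilde{g}: G/H \to G/K$ exists exactly when $H^g \subseteq K$, and the several distinct morphisms in $O_\cF$ that share the symbol $\widetilde{g}$ as its source and target vary. Once these are pinned down, everything reduces to the two coset identities above.
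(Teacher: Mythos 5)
Your proof is correct and takes essentially the same route as the paper: the paper constructs the counit by the formula $\varepsilon_T(G/H)([x]_H)=T(\widetilde{1}_H)(x)$ and observes that $\theta^*\theta_!$ is the identity, which is precisely your $\Psi$ specialized to $f=\mathrm{id}_{\theta^*T}$, with the same well-definedness check via $\widetilde{1}_H\circ\widetilde{h}=\widetilde{1}_H$. The only difference is packaging (explicit hom-set bijection versus unit--counit), and your version spells out more of the routine verifications.
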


\begin{proof}  For an object $T: O_\cF \to \sS$ and $H \leq G$, let $\varepsilon_T(G/H):T(G/\ee)/H \to T(G/H)$ be defined by 
	$$\varepsilon_T(G/H)([x]_H)=T(\widetilde{1}_H)(x)$$ 
	for every simplex $x$ in $T(G/\ee)$. Since $\widetilde{1}_H\circ h=\widetilde{1}_H:G/\ee \to G/H$, $\varepsilon_T(G/H)$ is well-defined by the functoriality of $T$. For any natural transformation $\mu:T\to T'$, $T'(\widetilde{1}_H)\circ \mu_{G/\ee}=\mu_{G/H}\circ T(\widetilde{1}_H)$; and hence, $\varepsilon:\theta_!\theta^*\Rightarrow 1_{[O_\cF,\sS]}$ is a natural transformation. Since $\theta^*\theta_!$ is the identity functor, $\theta_!$ is left adjoint to $\theta^*$. 
\end{proof}

The category $\sS$ admits a well known combinatorial model structure. Therefore the functor category $\OFS$ admits the injective model structure which is again combinatorial. Denote by $\OFS_{inj}$ the injective model structure on $\OFS$. The Acyclicity Theorem  gives us the necessary and sufficient conditions for the existence of the induced model structure along the adjunction. We use the cylinder object argument  for proving the acyclicity condition, and hence to show that the left induced model structures on $\SG$ exists.

% We show that $\SG$ admits underlying-cofibrant replacements, see \cite[2.2.2]{shipleyhessinduced}, induced by $\theta_!$ either from  $\OFS_{proj}$ or $\OFS_{inj}$ so that the cofibrant objects have good cylinders in both cases. 
\begin{theorem}\label{thm:orbmodel}
	There exists a left induced model structure on $\SG$, transferred along $\theta_!$ from the injective model structure on $\OFS$; i.e., $f$ in $\SG$ is a weak equivalence (resp. cofibration) if $\theta_!(f)$ is a weak equivalence  (resp. cofibration) in $\OFS_{inj}$.
\end{theorem}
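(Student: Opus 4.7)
The plan is to apply Proposition~\ref{prop:cylinder-object-argument} to the adjunction $\theta_! \dashv \theta^*$. Since $\OFS_{inj}$ is combinatorial, and hence accessible, it suffices to verify that (i) $\SG$ admits underlying cofibrant replacements and (ii) every cofibrant object has a good cylinder object in the putative left induced structure.

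For (i), the observation is that in the injective model structure on $\OFS$ the cofibrations are exactly the objectwise monomorphisms, and every simplicial set is cofibrant; consequently every object of $\OFS_{inj}$ is cofibrant. In particular $\theta_!(A)$ is cofibrant for every $A \in \SG$, which forces every object of $\SG$ to be cofibrant in the transferred structure. I therefore set $QA := A$, $\varepsilon_A := 1_A$, and $Qf := f$, fulfilling (i) tautologically and functorially.

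For (ii), I propose the natural cylinder $\Cyl(A) := A \times \Delta^1$, where $\Delta^1$ carries the trivial $G$-action, and factor the codiagonal as
$$A \sqcup A \;\longrightarrow\; A \times \Delta^1 \;\longrightarrow\; A$$
via the endpoint inclusion followed by the projection. The key computation is that for every $H \in \cF$ the trivial $H$-action on $\Delta^1$ yields a natural isomorphism
$$(A \times \Delta^1)/H \;\cong\; (A/H) \times \Delta^1,$$
compatible with morphisms $\widetilde{g}\colon G/H \to G/K$ in $O_\cF$, since the formula $\theta_!(A)(\widetilde{g})([a]_H) = [a\cdot g]_K$ leaves the $\Delta^1$-coordinate untouched. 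Combined with the fact that $\theta_!$ is a left adjoint and so preserves coproducts, this identifies the factorization above, evaluated at $G/H$, with the standard cylinder decomposition $(A/H) \sqcup (A/H) \hookrightarrow (A/H) \times \Delta^1 \to A/H$ in $\sS$. The first map is an objectwise monomorphism, hence a cofibration in $\OFS_{inj}$, while the second is an objectwise weak equivalence because $\Delta^1$ is contractible.

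With (i) and (ii) in place, Proposition~\ref{prop:cylinder-object-argument} will immediately produce the desired left induced model structure on $\SG$; no further acyclicity argument is needed. The only nontrivial step is thus essentially bookkeeping: verifying the naturality of $(A \times \Delta^1)/H \cong (A/H) \times \Delta^1$ as an isomorphism of functors $O_\cF \to \sS$. I do not anticipate any deeper obstacle.
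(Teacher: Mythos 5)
Your proposal is correct and follows essentially the same route as the paper's own proof: both verify the two hypotheses of Proposition \ref{prop:cylinder-object-argument} by noting that every object of $\OFS_{inj}$ is cofibrant (so underlying-cofibrant replacements are trivial) and by using the cylinder $A\times\Delta[1]$ with trivial $G$-action, together with the isomorphism $(A\times\Delta[1])/H\cong (A/H)\times\Delta[1]$. Your explicit attention to the naturality of that isomorphism over $O_\cF$ is a minor point the paper leaves implicit, but the argument is the same.
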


\begin{proof}
	It is well-known that every object in $\sS$ is cofibrant, implying that every object in $\OFS_{inj}$ is cofibrant. Thus, $\SG$ trivially admits underlying-cofibrant replacements in the sense of \cite[2.2.2]{shipleyhessinduced}. For an object $A$ in $\SG$, let $\Cyl(A)$ be $A\times \Delta[1],$ where $\Delta[1]$ has the trivial $G$-action. Consider the factorization $$A\amalg A\stackrel{i}\to \Cyl(A)\stackrel{w}\to A,$$ where $w(x,k)=x$. Since $G$ acts trivially on $\Delta[1]$, $$  \theta_!(\Cyl(A))(G/H)= (A\times \Delta[1])/H  \cong A/H \times \Delta[1]=\theta_!(A)(G/H)\times \Delta[1]$$ for every $H\leq G$. Therefore, the following diagram commutes:
	$$\xymatrix{
		\theta_!(A \amalg  A)\ar[d]_{\cong} \ar[r]^{	\theta_!(i)} &	\theta_!(A \times \Delta[1]) \ar[d]^{\cong}\ar[r]^{\ \ \theta_!(w)}& \theta_!(A) \ar[d]^{=}\\
		\theta_!(A)\amalg \theta_!(A)\ar[r]^{\bar i}  & \theta_!(A)\times \Delta[1]\ar[r]^{\ \ \bar w}  &\theta_!(A)}$$
	Since the codiagonal in $\OFS$ is of the form $T(-)\amalg T(-)\to T(-)$, the model category $\OFS_{inj}$ admits functorial cylinder objects given by $T\mapsto T(-)\times \Delta[1]$. Thus $\bar w$ is a weak equivalence and $\bar i$ is a cofibration. By Proposition \ref{prop:cylinder-object-argument}, there is a left induced model structure on $\SG$.\end{proof}
We denote the model category obtained in Theorem \ref{thm:orbmodel} by $\SG^{{\cF}}$. It is easy to see that every object is cofibrant in $\SG^{{\cF}}$. However, the fibrant objects in the injective model does not usually have a nice descriptions, see for example \cite[p.2]{isaksen2005flasque}.

  On the other hand, $\SG^{{\cF}}$ is combinatorial. This follows directly from  \cite[2.4]{makkai2014cellular} and the fact that $\sS$, and hence $\SG$, is locally finitely presentable \cite{adamek}. Every cofibration in this model structure is clearly a monomorphism in each simplicial degree. Given an inclusion $f:A\to B$ of $G$-sets, it is straightforward that $f/H:A/H\to B/H$ is an inclusion; i.e., $f$ is a cofibration. In fact, if $[f(a)]_H=[f(a')]_H$ for some $a,a'\in A$, then there exist $h\in H$ such that $f(a)=f(a')\cdot h=f(a'\cdot h)$. Then $[a]_H=[a']_H$ by injectivity of $f$.  Thus, cofibrations of the model categories of Theorem \ref{thm:orbmodel} are precisely  inclusions in each simplicial degree for any given collection $\cF$. In particular, generating cofibrations are inclusions of finite simplicial $G$-sets  (i.e., simplicial $G$-sets that are finite in each simplicial degree) and generating acyclic cofibrations are such maps that are also weak equivalences. Fibrations are those maps that have the right lifting property with respect to all weak equivalences between finite simplicial $G$-sets that are also  inclusions in each degree. Equivalently, $X$ is fibrant if and only if for every inclusion of simplicial $G$-sets $f:A\to B$ between finite simplicial $G$-sets which is also a weak equivalence, the induced map $f^*:\SG(B,X)\to \SG(A,X)$ is a surjection. This implies, in particular, $X/H$ is a Kan complex for every $H\in \cF$. In fact, if $X$ is fibrant, then $X\to *$ has the right lifting property against horn-inclusions, since each horn inclusion, considered with the trivial $G$-action, is an acyclic cofibration in the orbit model structure.

The adjoint pair $(\theta_!\dashv \theta^* )$ is a Quillen adjunction by definition. However, it does not have to be weak equivalence. Before giving a counter example, we need  the  following well-known lemma (see also \cite[1.3.16]{hovey}) which is often used in its dual form:
\begin{lemma}
	Given a Quillen pair $\adj{L}{\cC}{\cM}{R}$, if the left adjoint is a homotopical functor that reflects weak equivalences (i.e., $L$ creates weak equivalences), then $(L \dashv R)$ is a Quillen equivalence if and only if for every fibrant object $c\in \cC$ the adjunction counit	$\epsilon:LRc\to c$ is a weak equivalence.
\end{lemma}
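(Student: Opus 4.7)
The plan is to reduce the lemma to the fundamental adjunction identity $\epsilon_c\circ L(\tilde f)=f$, valid for any morphism $f\colon LX\to c$ with adjunct $\tilde f\colon X\to Rc$; this identity is immediate from one triangle identity together with naturality of the counit $\epsilon$. I will combine it with Hovey's characterization \cite[Prop.~1.3.13]{hovey}: $(L\dashv R)$ is a Quillen equivalence if and only if, for every cofibrant $X\in\cC$ and every fibrant $c\in\cM$, a morphism $f\colon LX\to c$ is a weak equivalence precisely when its adjunct $\tilde f\colon X\to Rc$ is one.

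For the ``if'' direction, fix cofibrant $X$, fibrant $c$, and $f\colon LX\to c$ with adjunct $\tilde f$. Two-out-of-three applied to $\epsilon_c\circ L(\tilde f)=f$, together with the hypothesis that $\epsilon_c$ is a weak equivalence, shows that $f$ is a weak equivalence if and only if $L(\tilde f)$ is; since $L$ both preserves and reflects weak equivalences, $L(\tilde f)$ is a weak equivalence if and only if $\tilde f$ is. Chaining these biconditionals yields precisely the required characterization, so $(L\dashv R)$ is a Quillen equivalence.

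For the ``only if'' direction, assume $(L\dashv R)$ is a Quillen equivalence and fix a fibrant $c$. Choose a cofibrant replacement $q\colon QRc\to Rc$; this is a weak equivalence, so $L(q)\colon LQRc\to LRc$ is also one because $L$ is homotopical. The adjunct of the composite $\epsilon_c\circ L(q)\colon LQRc\to c$ is precisely $q$, and by the characterization recalled above this forces $\epsilon_c\circ L(q)$ to be a weak equivalence; two-out-of-three against $L(q)$ then gives that $\epsilon_c$ is a weak equivalence. The main obstacle here is essentially organizational, namely selecting the equivalent formulation of Quillen equivalence in which the adjunction identity $\epsilon\circ L\tilde f=f$ carries the entire argument, so that beyond the single cofibrant replacement $q$ no further replacement or lifting machinery is required.
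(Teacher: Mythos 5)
Your proof is correct, and it is organized around a genuinely different characterization of Quillen equivalence than the one the paper uses. The paper works with the derived unit/counit criterion: it shows the derived counit $LQR(c)\to LR(c)\to c$ is a weak equivalence (immediately from the hypothesis plus $L$ homotopical), and then shows the derived unit is one by recognizing the composite $Ld\to LRL(d)\to LR(PLd)\to PLd$ as the fibrant replacement $j_{Ld}$ and applying two-out-of-three followed by reflection along $L$; the converse is dismissed as clear. You instead verify Hovey's definition of Quillen equivalence directly --- $f\colon LX\to c$ is a weak equivalence iff its adjunct is, for $X$ cofibrant and $c$ fibrant --- by running two-out-of-three on the identity $f=\epsilon_c\circ L(\tilde f)$ and then transporting across $L$ using that it preserves and reflects weak equivalences. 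Both arguments ultimately hinge on that same adjunction identity (the paper applies it to $j_{Ld}$, you apply it to a general $f$), but your decomposition buys two things: the forward direction needs no fibrant replacement at all, and you give an honest proof of the ``only if'' direction (via the observation that the adjunct of $\epsilon_c\circ L(q)$ is the cofibrant replacement $q$ itself), which the paper omits. The only quibble is bibliographic: the characterization you invoke is essentially Hovey's Definition 1.3.12 rather than Proposition 1.3.13, but the mathematical content you use is correct and standard.
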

\begin{proof}
	Suppose that the adjunction counit
	$\epsilon:LRc\to c$ is a weak equivalence for every fibrant object $c \in \mathcal{C}$.  Since $L$ preserves the weak equivalences, the derived adjunction unit 
	$$\xymatrix{ LQR(c)\ar[r]^{Lp_{Rc}} &	 LR(c)  \ar[r]^{\epsilon}& c}$$
	is a weak equivalence.   Let $j_{(-)}$ and $p_{(-)}$ be the fibrant and cofibrant replacements. Let $d\in \cC$.  As the adjunct of the adjunct of the fibrant replacement $j_{Ld}$,  the composition
	$$\xymatrix{ Ld \ar[r]^{L\eta} &	 LRL(d)  \ar[r]^{LR(j_{Ld})}& 	LR(P(L(d)))  \ar[r]^{\epsilon_{PLd}} & PLd}.$$
 is just $j_{Ld}$. Since $PL(d)$ is fibrant, $\epsilon_{PLd}$ is a weak equivalence. By the $2$-out-of-$3$ property, the composition 
	$$\xymatrix{ Ld \ar[r]^{L\eta} &	 LRL(d)  \ar[r]^{LR(j_{Ld})}& 	LR(P(L(d))) }$$
	is a weak equivalence.	Since $L$ reflects weak equivalences, the derived adjunction counit
	$$\xymatrix{ d \ar[r]^{\eta} &	 RL(d)  \ar[r]^{R(j_{Ld})}& 	R(P(L(d))) }$$
	is a weak equivalence; i.e., the pair $(L \dashv R)$ induces equivalence on homotopy categories. Thus, $(L \dashv R)$ is a Quillen equivalence. The converse is clear. 
\end{proof}

Consider the case $G=\bbZ/2$. The orbit category has two objects $G/G$ and $G/\ee$, and two non-identity morphisms $\widetilde{1}_G:G/\ee\to G/G$ and $g:G/\ee\to G/\ee$ with $g\circ g=id$ and $\widetilde{1}_G\circ g=\widetilde{1}_G$. Consider the functor $T:O(\bbZ/2)\to \sS$ given as follows: On objects $T(G/\ee)$ is the discrete simplicial set $3$ elements in each degree, $T(G/G)$ is the terminal object, and on morphisms, $T(g)=id$ and $T(\widetilde{1}_G)$ is the obvious unique map. Let $\tilde T$ be the fibrant replacement of $T$. Then, $|\tilde T(G/\ee)|$ is homotopy equivalent to the discrete space with three elements and $|\tilde T(G/G)|$ is a contractible space. Then, $|\tilde T(G/\ee)|/G$ and $|\tilde T(G/G)|$ can not be a homotopy equivalent since there is no $\bbZ/2$ action on three element set whose quotient is a singleton. In particular, the adjunction counit $\epsilon:\theta_!\theta^*\tilde T\to \tilde T$ can not be an objectwise weak equivalence. Hence, by the lemma above  $(\theta_!\dashv \theta^* )$ can not be a Quillen equivalence. Similar examples can easily be constructed for any  finite group $G$. 

\subsection{An application}
Now, we are ready to discuss our main application. There is an adjoint equivalence of categories $$\adj{id^{-}}{\SG^{\cF}}{\GS_{\cF}}{id_{-}},$$ where  an object is sent to the same object with action reversed (where $id^{-}$ is the right adjoint).  We first prove the following proposition.
\begin{proposition}
	The inverse functor $$\adj{id^{-}}{\SG^{\cF}}{\GS_{\cF}}{id_{-}},$$ sending an object to itself with the inverse action, is a Quillen pair.
\end{proposition}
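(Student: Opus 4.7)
The plan is to check the Quillen condition by showing that the right adjoint $id^{-}\colon\SG^{\cF}\to\GS_{\cF}$ preserves fibrations and acyclic fibrations. Two elementary observations are fundamental. First, for a right $G$-space $X$ and a subgroup $H\leq G$, the $H$-fixed-point simplicial subset is unaffected by reversing the action, since $x\cdot h=x$ for all $h\in H$ and $x\cdot h^{-1}=x$ for all $h\in H$ are the same condition; hence $(id^{-}X)^{H}=X^{H}$ naturally in $X$. Second, by the definition of the $\cF$-model structure on $\GS$, a map $f$ is a fibration (resp.\ acyclic fibration) in $\GS_{\cF}$ precisely when $f^{H}$ is a Kan fibration (resp.\ acyclic Kan fibration) of simplicial sets for every $H\in\cF$. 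Together these reduce the task to the following: for every fibration (resp.\ acyclic fibration) $f$ in $\SG^{\cF}$ and every $H\in\cF$, the induced map $f^{H}$ is a Kan fibration (resp.\ acyclic Kan fibration).

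The key lemma identifies explicit (acyclic) cofibrations in $\SG^{\cF}$: for every $H\in\cF$ and every $n,k$, the map $\Lambda^{n}_{k}\times G/H\to\Delta^{n}\times G/H$ is an acyclic cofibration in $\SG^{\cF}$, and $\partial\Delta^{n}\times G/H\to\Delta^{n}\times G/H$ is a cofibration; here $G$ acts trivially on the simplex factor and by right multiplication on $G/H$. Both maps are degreewise monomorphisms of $G$-sets, hence cofibrations by the characterization recorded immediately after Theorem~\ref{thm:orbmodel}. For the weak equivalence assertion, I compute $K$-orbits for any $K\leq G$: because $K$ acts trivially on the simplex factor,
$$(A\times G/H)/K\;\cong\;A\times(K\backslash G/H)\quad\text{for }A=\Delta^{n}\text{ or }\Lambda^{n}_{k},$$
where $K\backslash G/H$ is the set of $(K,H)$-double cosets. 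The induced map on $K$-orbits is therefore the coproduct, indexed by $K\backslash G/H$, of the standard horn inclusion $\Lambda^{n}_{k}\to\Delta^{n}$, and this is a weak equivalence in $\sS$. Hence $\theta_{!}$ of the original map is an objectwise, and so injective, weak equivalence in $\OFS$, as required.

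With the lemma in hand the conclusion is routine. Using the identification $\SG(A\times G/H,X)\cong\sS(A,X^{H})$ valid for any simplicial set $A$ with the trivial $G$-action (obtained by restricting an equivariant map to the basepoint $H\in G/H$), a lifting square for $f\colon X\to Y$ against $\Lambda^{n}_{k}\times G/H\to\Delta^{n}\times G/H$ corresponds to a lifting square for $f^{H}$ against $\Lambda^{n}_{k}\to\Delta^{n}$, and the same holds with the boundary inclusions. Consequently, the right lifting property of a fibration (resp.\ acyclic fibration) $f$ in $\SG^{\cF}$ against the maps above forces $f^{H}$ to be a Kan fibration (resp.\ acyclic Kan fibration) for every $H\in\cF$, which is what we needed. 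The main potential obstacle is the weak equivalence claim inside the lemma, but it becomes transparent once one notices that $K$-orbits of $A\times G/H$ factor as a product because $K$ acts trivially on $A$, reducing everything to the classical fact that horn inclusions are weak equivalences of simplicial sets.
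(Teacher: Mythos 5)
Your proof is correct, and its core is the same key fact the paper uses: the generating cofibrations $\partial\Delta^n\times G/H\to\Delta^n\times G/H$ and generating acyclic cofibrations $\Lambda^n_k\times G/H\to\Delta^n\times G/H$ of $\GS_{\cF}$ are sent by $id_-$ to cofibrations and acyclic cofibrations of $\SG^{\cF}$. Where you diverge is in how this is parlayed into the Quillen condition. The paper stays on the left-adjoint side: since every (acyclic) cofibration of the cofibrantly generated model category $\GS_{\cF}$ is a retract of a transfinite composition of pushouts of generators, and these operations preserve (acyclic) cofibrations in $\SG^{\cF}$, the functor $id_-$ preserves cofibrations and acyclic cofibrations. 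You instead pass to the adjoint formulation: via $\SG(A\times G/H,X)\cong\sS(A,X^H)$ and the fixed-point characterization of (acyclic) fibrations in $\GS_{\cF}$, you show the right adjoint $id^-$ preserves fibrations and acyclic fibrations. The two routes are equivalent by the standard lifting/adjunction correspondence, so neither is more general; but your version has the merit of spelling out the double-coset computation $(A\times G/H)/K\cong A\times (K\backslash G/H)$, which is the actual justification that $\Lambda^n_k\times G/H\to\Delta^n\times G/H$ is a \emph{weak equivalence} in $\SG^{\cF}$ --- a point the paper's proof asserts rather tersely (``since these maps are $G$-equivariant inclusions'' only accounts for the cofibration half). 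Conversely, the paper's argument never needs the explicit description of fibrations in $\GS_{\cF}$, which your argument does invoke (correctly, since in the $\cF$-model structure fibrations and weak equivalences are created on $H$-fixed points).
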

\begin{proof} The $\cF$-model structure on $\GS$ is cofibrantly generated  where the generating cofibrations and generating acyclic cofibrations are $$ \ii=\{ {G/H}\times i_n:G/H\times\partial\Delta^n\to G/H\times\Delta^n \ | \  n\in \bbN, H\in \cF \}$$ 	and $$ \jj=\{G/H\times j_n:G/H\times \Lambda^i[n]\to G/H\times \Delta^n \ | \  n\in \bbN, H\in \cF \}$$ respectively. Moreover, for all $n\in \bbN$, the boundary inclusions $\partial\Delta^n\to \Delta^n$ are cofibrations  and the horn inclusions $\Lambda^i[n]\to \Delta^n$ are acyclic cofibrations in the orbit model structure when considered as $G$-maps with trivial actions. Thus, ${G/H}\times i_n$'s are cofibrations and ${G/H}\times j_n$'s are acyclic cofibrations in the orbit model structure, since these maps are $G$-equivariant inclusions. Cofibrations are closed under retracts, pushouts and transfinite compositions. Since any cofibration (resp. acyclic cofibrations) in $\GS_{\cF}$ is a retract of a morphisms obtained by a transfinite composition of pushouts of coproducts of elements in $\ii$ (resp. $\jj$), $id_{-}:\GS_{\cF}\to \SG^{\cF}$ preserves cofibrations and acyclic cofibrations. This proves the statement.
\end{proof}
 Since right Quillen functors preserves weak equivalences between fibrant objects, we have the following Corollary. 

\begin{corollary}\label{cor:main}.
	Suppose that $A$ and $B$ be fibrant in $\SG^{\cF}$. Then, for a $G$-map $f:A\to B$, if  $f/H:A/H\to B/H$ is a weak equivalence for each $H\in \cF$ then $f^H:A^H\to B^H$ is a weak equivalence for each $H\in \cF$.
\end{corollary}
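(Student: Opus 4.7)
The plan is to derive this directly from the Quillen pair set up in the preceding proposition, combined with Ken Brown's lemma. The argument splits cleanly into three small steps, so I do not anticipate any real technical obstacle.

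First, I would translate the hypothesis into the orbit model structure $\SG^{\cF}$. The assumption that $f/H\colon A/H\to B/H$ is a weak equivalence for each $H\in\cF$ says exactly that $\theta_!(f)$ is an objectwise weak equivalence in $\OFS$, which is by construction the condition for being a weak equivalence in the injective model structure $\OFS_{inj}$. By Theorem \ref{thm:orbmodel}, this means that $f$ itself is a weak equivalence in $\SG^{\cF}$.

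Next, I would appeal to the preceding proposition, which identifies the action-reversal adjunction between $\SG^{\cF}$ and $\GS_{\cF}$ as a Quillen pair, with $id^{-}\colon \SG^{\cF}\to \GS_{\cF}$ the right Quillen functor. Since right Quillen functors preserve weak equivalences between fibrant objects (Ken Brown's lemma), and since $A$ and $B$ are fibrant in $\SG^{\cF}$ by assumption, the image $id^{-}(f)$ is a weak equivalence in $\GS_{\cF}$.

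Finally, by definition a weak equivalence in the $\cF$-model structure $\GS_{\cF}$ is a map inducing weak equivalences on $H$-fixed points for every $H\in\cF$. Because fixed point sets are insensitive to reversal of the action, one has $id^{-}(f)^{H}=f^{H}$ as a simplicial map, and hence $f^{H}\colon A^{H}\to B^{H}$ is a weak equivalence for each $H\in\cF$, yielding the conclusion. The only point where a careless slip could occur is in swapping $id^{-}$ with $id_{-}$: the proposition is explicit that the left Quillen functor is $id_{-}\colon \GS_{\cF}\to \SG^{\cF}$, so the preservation of weak equivalences between fibrant objects runs in the direction required here.
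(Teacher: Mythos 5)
Your proof is correct and is exactly the argument the paper intends: translate the hypothesis into ``$f$ is a weak equivalence in $\SG^{\cF}$,'' apply Ken Brown's lemma to the right Quillen functor $id^{-}$ from the preceding proposition, and unwind the definition of weak equivalence in $\GS_{\cF}$. The paper compresses all of this into the single sentence ``since right Quillen functors preserve weak equivalences between fibrant objects,'' so your version is just a fuller writing-out of the same route, including the correct identification of which side of the adjunction is right Quillen.
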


A simplicial $G$-homotopy is a left homotopy with respect to the cylinder object given in the proof of Theorem \ref{thm:orbmodel}; that is, given simplicial $G$-maps $f,g:A\to B$ are $G$-homotopic if there is a simplicial $G$-map $H:A\times \Delta[1]\to B$ such that the following diagram commutes:
$$\xymatrix{
	A\times \Delta[0]\ar[dr]_{f} \ar[r]^{id\times \delta^1} &	A\times \Delta[1] \ar[d]^{H}& A\times \Delta[0]\ar[l]_{id\times \delta^0}\ar[dl]^{g}\\
	& B  & }$$

 Since every object of $\SG^{\cF}$ are already cofibrant, by \cite[4.24]{dwyer}, we have the following result, which is the Whitehead theorem for this model structure. 
\begin{corollary}\label{cor:main2}.
	Suppose that $A$ and $B$ be fibrant in $\SG^{\cF}$. Then, a $G$-map $f:A\to B$, with  $f/H:A/H\to B/H$ is a weak equivalence for each $H\in \cF$ then $f^H:A^H\to B^H$ is a simplicial $G$-homotopy equivalence for each $H\in \cF$.
\end{corollary}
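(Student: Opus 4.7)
The plan is to apply the Whitehead theorem for model categories (\cite[4.24]{dwyer}) inside $\SG^{\cF}$ and then push the resulting $G$-homotopy equivalence through the $H$-fixed point functor, using the fact that the cylinder object from Theorem \ref{thm:orbmodel} is trivial in the $\Delta[1]$-factor.

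First, I would observe that the hypothesis that $f/H$ is a weak equivalence for every $H\in\cF$ is, by the very definition of the left-induced model structure in Theorem \ref{thm:orbmodel}, exactly the statement that $f$ is a weak equivalence in $\SG^{\cF}$. Since every object of $\SG^{\cF}$ is cofibrant (as noted after Theorem \ref{thm:orbmodel}) and $A$, $B$ are fibrant by assumption, the hypotheses of the Dwyer--Spalinski Whitehead theorem are satisfied, and $f$ is a homotopy equivalence in $\SG^{\cF}$. Moreover, one may take the homotopies with respect to the cylinder object $\Cyl(A)=A\times\Delta[1]$ explicitly constructed in the proof of Theorem \ref{thm:orbmodel}, which is a cylinder in the model-categorical sense because the factorization exhibited there decomposes the codiagonal into a cofibration followed by a weak equivalence. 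So we obtain a $G$-equivariant map $g\colon B\to A$ together with $G$-equivariant simplicial homotopies $H_A\colon A\times\Delta[1]\to A$ and $H_B\colon B\times\Delta[1]\to B$ from $gf$ to $1_A$ and from $fg$ to $1_B$ respectively.

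Next, I would apply the $H$-fixed point functor. Since $G$ acts trivially on $\Delta[1]$, for every $G$-space $X$ we have $(X\times\Delta[1])^H = X^H\times\Delta[1]$. Applying $(-)^H$ to $H_A$ and $H_B$ therefore yields simplicial homotopies $A^H\times\Delta[1]\to A^H$ and $B^H\times\Delta[1]\to B^H$ from $g^H f^H$ to $1_{A^H}$ and from $f^H g^H$ to $1_{B^H}$, which is precisely the data exhibiting $f^H$ as a simplicial homotopy equivalence.

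I do not expect any serious obstacle. The only step deserving a sentence of justification is that the cylinder of Theorem \ref{thm:orbmodel} is the same one used to interpret \emph{simplicial $G$-homotopy} as defined just before the statement of the corollary, so that the notion of ``homotopy equivalence'' produced by \cite[4.24]{dwyer} coincides with simplicial $G$-homotopy equivalence; this is immediate from the displayed factorization $A\amalg A\to A\times\Delta[1]\to A$. All other steps are formal.
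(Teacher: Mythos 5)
Your proposal is correct and follows the paper's own route: the paper's proof is precisely the one-line appeal to the Whitehead theorem \cite[4.24]{dwyer}, using that every object of $\SG^{\cF}$ is cofibrant and that $A$, $B$ are fibrant, so the weak equivalence $f$ (which it is by definition of the left-induced structure) becomes a homotopy equivalence with respect to the good cylinder $A\times\Delta[1]$ from Theorem \ref{thm:orbmodel}. Your extra step of applying $(-)^H$ via $(X\times\Delta[1])^H=X^H\times\Delta[1]$ is sound and actually supplies a justification for the corollary's literal conclusion about $f^H$ that the paper leaves implicit (its statement reads like a typo for ``$f$ is a simplicial $G$-homotopy equivalence'', which is what the cited Whitehead theorem directly yields).
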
 

 The category of topological spaces is not an accessible model category since it is not locally presentable (see e.g. \cite{hovey}). Hence, the left transfer argument above does not work for the category of $G$-topological spaces. On the other hand, there is "a very convenient" coreflective subcategory of topological spaces, namely, the category of $\Delta$-generated spaces, which is locally presentable. We denote by $\tT_\Delta$ and refer to \cite{duggerdelta} for the details. The category  $\tT_\Delta$ admits a combinatorial model category which has the equivalent homotopy theory as the Quillen model structure on topological spaces (i.e., the inclusion of the subcategory with the coreflector is a Quillen equivalence, see \cite{haraguchi2015model}). This category contains every $CW$-complex and the cofibrant replacement in this model category is the usual $CW$-approximation, which commute (up to homotopy) with the $H$-orbit space functor for every $H\leq G$. Thus, Theorem \ref{thm:orbmodel} holds if we replace the category of simplicial sets by the category of $\Delta$-generated spaces. The proof is similar to the proof of  Theorem \ref{thm:orbmodel}, hence we omit. The corollaries of this section follows readily for this case as well. \\

\subsection{For general  model categories}
Generalizations of the model structure above are quite straightforward after the theorem above. Let $\cM$ be an accessible model category. This guarantees the existence of the injective model structure on $[O_\cF,\cM]$, which is again accessible (see \cite[Theorem 3.4.1]{shipleyhessinduced}). The category of right $G$-objects is the functor category $[BG^{\opp},\cM]$, which we simply denote by $\cM_G$.  For a subgroup  $H\leq G$ and an object $A= BG^{\opp}\to \cM$, the $H$-quotient object $A/H$ is the colimit of the restricted diagram, $BH^{\opp}\to BG^{\opp}\to \cM$ in $\cM$. We have an adjunction $$\adj{\theta_!}{\cM_G}{ [O_\cF,\cM]}{\theta^*}$$ 
where $\theta_!(A)(G/H)=A/H$ and $\theta^*(A)=A(G/\ee)$.   If $\cM_G$ 
\begin{itemize}
	\item[i)]admits underlying cofibrant replacements, and
	\item[ii)]  has good cylinder objects for cofibrant objects,
\end{itemize} then there exists a left induced model structure on $\cM_G$, created by $\theta_!$ from the injective model structure on $[O_\cF,\cM]$ by the left transfer argument. Note that, (i) holds if every object is cofibrant in $\cM$ or subgroup orbit functor $-/H:\cM_G\to \cM$ preserves underlying cofibrant replacements and (ii) holds if there exists an object $I$ (e.g. an interval object) such that for every cofibrant object $A$ in $\cM$, the cylinder object is of the form $A\times I$ and there is a natural isomorphism $\theta_!(A\times I)\cong \theta_!(A)\times I$. Then, one can show the existence of the left induced model structure on $\cM_G$ as in the proof of Theorem \ref{thm:orbmodel}.\\

\paragraph{\bf{Equivariant Joyal model structure via orbits}}
The category $\sS$ admits another model structure in which cofibrations are monomorphisms and fibrant objects are quasi-categories (i.e., simplicial sets for which every inner horn has a filler), see \cite{joyalqcat,joyal}. This model structure is also combinatorial and the cylinder object is given by the functor $-\times J$ where $J$ is the groupoid generated by single arrow, see \cite[Prop. 6.18.]{joyal2}. Every object is clearly cofibrant. Thus, for any group $G$ and a collection of subgroups $\cF$, we have a model structure on $\sS_G$ in which a map $f$ is a weak equivalence (resp. a cofibration) if $f/H$ is a weak equivalence in the Joyal model structure (resp. an inclusion in each degree) for every $H$ belonging to $\cF$. This model structure is also combinatorial. More generally, one replaces the Joyal model structure on $\sS$ by any of its left Bousfield localization.\\

\paragraph{\bf{Equivariant model structure on $G$-chain complexes by subgroup orbits}}
Let $\Ch_+$ denote the category of non-negatively graded chain complexes in $R Mod$ with the injective model structure. The weak equivalences in this model structure are quasi-isomorphisms and cofibrations are degree-wise monomorphisms. The class of fibrations are degree-wise surjections with injective kernel and fibrant objects are chain complexes of injective modules. This model structure is also combinatorial (see e.g. \cite[2.3.13]{hovey}), where generating cofibrations are cofibrations  whose domain and codomain have cardinality less than $|R|$ if $R$ is infinite or finite if $R$ is finite. Here, the cardinality of a chain complex is the cardinality of union of modules in each degree. In other words, generating cofibrations are cofibrations between $\lambda$-small object where $\lambda$ is the supremum of $|R|$ and the first infinite cardinal $\omega$ (see \cite[2.4]{makkai2014cellular}).

Denote by $\Ch_+^G$ the category of chain complexes with right $G$-actions (i.e., the functor category $[BG^{\opp},\Ch_+]$). For a $G$-chain complex $A$, the quotient chain complex $A/H$ is the degree-wise $H$-quotients of $G$-modules. We say $f:A\to B$ in $\Ch_+^G$ is a weak equivalence (resp. cofibration) if $f/H:A/H\to B/H$ is a quasi-isomorphism (resp. degree-wise monomorphism) for every $H\in \cF$. Then, every object is clearly cofibrant in $\Ch_+^G$ which means $\Ch_+^G$ admits the underlying cofibrant replacements. The cylinder object is $\Cyl(A)_n=A_n\times A_{n-1}\times A_n$ with the $G$-action induced by the $G$-action on $A$. Then, $$\Cyl(A)_n/H\cong A_n/H\times A_{n-1}/H\times A_n/H$$ and we got an isomorphism $\Cyl(A)/H\cong \Cyl(A/H)$. Hence, we obtain a model structure on $\Ch_+^G$ with these weak equivalences and cofibrations, which is also combinatorial.  \\

\paragraph{\bf{Equivariant model structures on simplicial $(G,H)$-bisets}} Another curious example is the following: Given a pair of groups $(G,H)$, we denote the category of simplicial $(G,H)$-bisets by $\GSH$; i.e., simplicial objects in $(G,H)$-bisets. From above, we get that $\GSH$ admits a model structure in which a map $f:A\to B$ is a weak equivalence (resp. cofibration) if for every $K\leq G, L\leq H$ the map $f^K/L:A^K/L\to B^K/L$ is a weak equivalence (resp. cofibration) of simplicial sets. In fact, by \cite[2.16]{stephan} if $\cF$ is the set of all subgroups of $G$, every object is cofibrant in $\GS_{\cF}$, which implies $\GSH$ admits underlying-cofibrant replacements. Moreover, the good cylinder object is of the form  $A\times  \Delta[1]$ for every object $A$ in $\GS_{\cF}$.

\bibliographystyle{spmpsci}
\bibliography{ocmsrefs}

\end{document}